\newcommand{\ra}{\rightarrow}
\newcommand{\ve}{\varepsilon}
\newcommand{\8}{\infty}
\newcommand{\nn}{\nonumber}
\newcommand{\be}{\begin{eqnarray}}
\newcommand{\ee}{\end{eqnarray}}
\newtheorem{thm}{Theorem}
\newtheorem{prop}[thm]{Proposition}
\newtheorem{lem}[thm]{Lemma}
\newtheorem{dfn}[thm]{Definition}
\newtheorem{rmk}[thm]{Remark}
\newtheorem{example}{Example}
\newcommand{\R}{\mathbb{R}}
\newcommand{\T}{\mathbb{T}}
\newcommand{\Z}{\mathbb{Z}}
\newcommand{\N}{\mathbb{N}}
\newcommand{\Q}{\mathbb{Q}}
\newcommand{\D}{\mathbb{D}}
\newcommand{\A}{\mathcal{A}}
\newcommand{\AN}{\mathbb{A}}
\newcommand{\M}{\mathcal{M}}
\newcommand{\tg}{\tilde{\gamma}}
\title{Action and periodic orbits on annulus }
\author{Yanxia Deng, Zhihong Xia}
\address{School of Mathematics (Zhuhai), Sun Yat-sen University, Zhuhai, Guangdong, China}
\address{Department of Mathematics, Northwestern University, Evanston, IL 60208 USA}
\email{dengyx53@mail.sysu.edu.cn, xia@math.northwestern.edu}
\date{version: June 10, 2021}
\begin{document}

\maketitle 

\begin{abstract}
  We consider the classical problem of area-preserving maps on annulus
  $\AN = S^1 \times [0, 1]$ . Let $\M_f$ be the set of all invariant
  probability measures of an area-preserving, orientation preserving
  diffeomorphism $f$ on $\AN$. Given any $\mu_1$ and $\mu_2$ in
  $\M_f$, Franks \cite{Franks1988}\cite{Franks1992}, generalizing Poincar\'e's last
  geometric theorem (Birkhoff \cite{Birkhoff1913}), showed that if their
  rotation numbers are different, then $f$ has infinitely many
  periodic orbits. In this paper, we show that if $\mu_1$ and $\mu_2$
  have different actions, even if they have the same rotation number,
  then $f$ has infinitely many periodic orbits.  In particular,
  if the action difference is larger than one, then $f$ has at least
  two fixed points. The same result is also true for area-preserving
  diffeomorphisms on unit disk, where no rotation number is available.

  
\end{abstract}

\section{Introduction}

Let $\AN= S^1 \times [0, 1]$, where $S^1 = \mathbb{R} / \mathbb{Z} $,
be an annulus with the standard area form $\omega = dy \wedge dx$,
where $x \in S^1, y \in [0, 1]$, and let $f$ be an area-preserving,
orientation preserving diffeomorphism on $\AN$ that preserves each
boundary component. Let $\beta$ be a primitive of $\omega$,
i.e. $d\beta=\omega$. Since $f$ is area-preserving and preserves
each boundary component, we know $f^*\beta-\beta$ is an exact 1-form. There is
a function $g$ on $\AN$ such that \[dg=f^*\beta-\beta.\] The real
valued function $g$ is called the {\it action function} for $f$. The
choice of $g$ depends on two factors, the first one is the integration
constant, which can be fixed by assigning a zero value at a
particular point. The second factor is the choice of $\beta$. Two
different choices of $\beta$ differ by a closed 1-form, we will show
how the action depends on $\beta$ (Proposition \ref{prop4}).  Let
$\M_f$ be the set of all $f$-invariant probability measures on
$\AN$. For any $\mu \in \M_f$, the {\em mean action}\/, or simply the
action, of $\mu$ is defined to be
$$\mathcal{A}(\mu) = \int_\AN gd\mu.$$
When $\mu$ is the area form, the action is called the {\em
  Calabi invariant}\/ (cf. \cite{Hutchings2016}).

\

Another important quantity that is associated with an invariant
measure $\mu \in \M_f$ is its {\em rotation number}\/. It measures the
average rotation around the annulus. The precise definition of
rotation numbers, and rotation vectors for general manifold, will be
given in the next section. Given any two invariant measures $\mu_1$
and $\mu_2$ in $\M_f$, Franks \cite{Franks1988, Franks1992},
generalizing Poincar\'e's last geometric theorem (Poincar\'e-Birkhoff
Theorem \cite{Birkhoff1913}), showed that if the rotation numbers of
$\mu_1$ and $\mu_2$ are different, then $f$ has infinitely many
periodic orbits. More precisely, for any rational number $p/q$ between
the rotation numbers of any two invariant measures, where $p$ and $q$
are relatively prime, there are at least two distinct periodic orbits
of period $q$ with rotation number $p/q$.

Poincar\'e's last geometric theorem, more generally Franks' theorem,
shows that the difference in rotation numbers forces the existence of
many other periodic orbits. In this paper, we will show that the same
is true for differences in actions. But first, we need to point out
that the difference in actions $\A(\mu_1) - \A(\mu_2)$ for two
invariant measures, in general, depends on the choice of $\beta$,
defined by $d\beta=\omega$. Interestingly, this dependence is closely
related to their rotation numbers. This dependence will be made
precise in the next section (Proposition \ref{prop4}). In particular,
we will show that if two invariant measures $\mu_1$ and $\mu_2$ have
exactly the same rotation number, then the action difference
$\A(\mu_1) - \A(\mu_2)$ is independent of any choice of the 1-form
$\beta$ such that $d\beta = \omega$.

For the problem on the annulus, to precisely state our results, we
will fix the special 1-form $\beta = y dx$, under the standard
coordinate. The choice of this 1-form is equivalent to collapsing the
lower boundary component into a point, effectively killing the
underlying topology.

Our main result is

\begin{thm}
\label{thm:main}
Let $f$ be an area-preserving, orientation preserving diffeomoprhism
on $\AN$, isotopic to identity. Let $\mu_1, \mu_2 \in \M_f$ be any two
$f$-invariant probability measures. Suppose that
$|\A(\mu_1) -\A(\mu_2)| \neq 0$, then $f$ has infinitely many distinct
periodic points. More precisely, for any positive integer $q$ such that
$$q
> \frac{1}{|\A(\mu_1) -\A(\mu_2)|} ,$$
$f$ has at least two distinct
periodic orbits with period $q$, and $q$ is the least period if it is a prime number.  In
particular, if
$|\A(\mu_1) -\A(\mu_2)|>1$, then $f$ has at least two fixed points.

The same result is also true for area-preserving, orientation
preserving diffeomorphism of the unit disk $\D = \{ (x, y) \in \R^2,\; x^2 +
y^2 \leq 1\}$.
\end{thm}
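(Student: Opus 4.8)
The plan is to imitate Franks' proof of the Poincar\'e--Birkhoff theorem, with the \emph{action} playing the role that the rotation number plays there, the one genuinely new ingredient being an estimate that ties the action function to the amount of angular twisting of the dynamics and is normalised by the total area $\int_\AN\om=1$.

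\emph{Set-up and reductions.} For $\beta=y\,dx$ one has $\beta|_{S^1\times\{0\}}=0$, so $g$ is constant on the lower boundary circle; normalise it to vanish there. Then the action function $g_n=\sum_{i=0}^{n-1}g\circ f^i$ of $f^n$ also vanishes on $S^1\times\{0\}$, and integrating $dg_n=(f^n)^*\beta-\beta$ along the path $\gamma_z$ that runs along the lower boundary and then straight up to $z$ gives the identity $g_n(z)=\int_{f^n(\gamma_z)}\beta$, the signed area swept beneath the arc $f^n(\gamma_z)$ in the universal cover; dividing by $n$ and applying Birkhoff's ergodic theorem recovers $\A(\mu)$. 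Since $\A_{f^q}(\mu)=q\,\A(\mu)$ and rotation numbers multiply by $q$ under $f\mapsto f^q$, it is enough to show that for every $q>1/|\A(\mu_1)-\A(\mu_2)|$ the map $F:=f^q$ has at least two periodic orbits of period $q$ with non-integral rotation number; this gives the two fixed points when $q=1$ and $|\A(\mu_1)-\A(\mu_2)|>1$, gives infinitely many distinct periodic points on letting $q$ run over the primes, and settles the ``least period $q$'' clause. The disk case runs identically: there the action is canonical because $H^1(\D)=0$ (Proposition \ref{prop4}), and Brouwer's theorem supplies the fixed point needed to begin the forcing.

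\emph{The forcing.} Suppose, for contradiction, that for some such $q$ the map $F=f^q$ has at most one periodic orbit of period $q$; I treat the main sub-case in which $F$ is fixed-point-free, the one-orbit sub-case being handled by a localised version of the same estimate. If the rotation interval $I$ of $f$ contained a rational $p/q$ in its interior, then two $f$-invariant measures would have rotation numbers straddling $p/q$, and Franks' theorem \cite{Franks1988,Franks1992} would already produce two period-$q$ orbits; so no rational $p/q$ lies in the interior of $I$. The task is to upgrade this to $|\A_F(\mu_1)-\A_F(\mu_2)|\le1$. The mechanism is visible on the model twist $(x,y)\mapsto(x+\alpha(y),y)$: for $\mu_i$ carried by $\{y=y_i\}$ with $y_0\le y_1$,
$$|\A(\mu_1)-\A(\mu_2)|=|g(y_1)-g(y_0)|=\Bigl|\,y_0\bigl(\alpha(y_1)-\alpha(y_0)\bigr)+\int_{y_0}^{y_1}\bigl(\alpha(y_1)-\alpha(s)\bigr)ds\,\Bigr|\le y_1\,|R|\le|R|,$$
where $R=\alpha([0,1])$ is the rotation interval and the height $y_1$ equals $\Vol(S^1\times[0,y_1])$. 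In general, since $F$ is fixed-point-free, Franks' combinatorics (equivalently, a Le~Calvez transverse foliation whose singularities are the periodic orbits of $f$) organises $\AN$ into finitely many essential sub-annuli $P$ on each of which $F$ acts freely, so that the angular-displacement range $R_P$ on each has width $<1$; localising the identity $g_n=\int_{f^n(\gamma_z)}\beta$ to each piece and summing gives $|\A_F(\mu_1)-\A_F(\mu_2)|\le\sum_P\Vol(P)\,|R_P|\le\sum_P\Vol(P)\le\int_\AN\om=1$, hence $q\,|\A(\mu_1)-\A(\mu_2)|\le1$ --- a contradiction. Therefore some piece $P$ has $|R_P|>1/q$; using that $g$ vanishes on the lower boundary one checks $R_P$ contains a non-integral $p/q$ in its interior, and Franks' theorem applied to the dynamics on $P$ yields at least two orbits of period $q$ and rotation number $p/q$, of least period $q$ when $q$ is prime.

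\emph{Main obstacle.} The crux is the decomposition together with the inequality $|\A_F(\mu_1)-\A_F(\mu_2)|\le\sum_P\Vol(P)\,|R_P|$: one must show that a nonzero action difference is always witnessed \emph{region by region} by an angular twist of commensurate size, so that the normalisation $\int_\AN\om=1$ can be brought to bear. It is false that a nonzero action difference forces the macroscopic rotation interval $I$ to be wide --- a diffeomorphism supported in a small topological disk has $I=\{0\}$ while its action spread, carried by the local rotation about an interior fixed point, can be arbitrarily large --- so the argument is necessarily local and rests on the structure theory of fixed-point-free, area-preserving, isotopic-to-the-identity surface homeomorphisms; the rest is bookkeeping with rotation numbers and fixed-point indices.
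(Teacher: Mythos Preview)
Your outline has a genuine gap at exactly the point you flag as the ``main obstacle.'' The claimed decomposition of $\AN$ into finitely many essential sub-annuli $P$ on each of which $F=f^q$ ``acts freely'' with angular-displacement range $|R_P|<1$ is not a consequence of Franks' work or of Le~Calvez's transverse foliation theory: if $F$ is fixed-point-free the Le~Calvez foliation has no interior singularities, but it does not hand you invariant sub-annuli, nor does freeness bound the width of any rotation interval by $1$. Worse, the quantity $R_P$ is not well-defined for a general area-preserving map (pointwise angular displacement is not an invariant of the dynamics), and the ``localisation'' of the identity $g_n(z)=\int_{f^n(\gamma_z)}\beta$ to a sub-annulus does not make sense as written, since $\gamma_z$ runs from the lower boundary through all intervening pieces. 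Your own closing example---a map supported in a small disk with trivial global rotation interval but large action spread---shows that the mechanism producing the action difference need not be an essential twist at all; it can be a purely local rotation about a fixed point, which no decomposition into essential sub-annuli will detect. So the inequality $|\A_F(\mu_1)-\A_F(\mu_2)|\le\sum_P\Vol(P)\,|R_P|$ is unproved and, in the form stated, almost certainly false.

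The paper takes a completely different route that avoids this difficulty. Rather than trying to bound the action spread by a rotation spread on the \emph{same} annulus, it manufactures a \emph{new} annulus on which the action difference literally becomes a rotation-number difference: collapse one boundary of $\AN$ to a point (so the space becomes a disk) and then blow up a fixed point $p$ to a new boundary circle. On the resulting annulus the mean rotation number equals the old action difference $\A(\mu_0)-\A(p)$, and Franks' theorem applies directly. This handles the case of two periodic orbits (Lemma~\ref{lem:per}); the passage to arbitrary ergodic $\mu_1,\mu_2$ is done by a closing argument---perturb $f$ on tiny disks so that generic points for $\mu_i$ become genuine periodic orbits of some large period $k$, apply the periodic case to the perturbed map, and observe that the resulting period-$q$ orbits (with $q\ll k$) cannot have been created by the perturbation. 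The blow-up/collapse trick is the missing idea in your proposal.
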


As an application to our theorem, we consider a rigid rotation on the
annulus with an irrational rotation number. The action function in
this case is a constant. One can perturb the map, preserving the area,
in a neighborhood of an essential simple closed curve to change the
mean rotation number, therefore creating new periodic orbits by
Franks' theorem. Our result shows that this can be done locally. Pick
any point in the interior of the annulus. For any small neighborhood
around the point, we can easily change the map to increase or decrease
the mean action of the map, consequently we have a different action
with respect to the area, i.e., the Calabi invariant is now different
from the actions of untouched orbits. Therefore, there must be infinitely
many new periodic points. The details of this example will be given at
the last section of this paper.

\section{Action and rotation vectors}
In this section, we give a general introduction of the action and
rotation vectors of symplectic diffeomorphisms.

\subsection{Action function}
Let $M^{2n}$ be a $2n$-dimensional symplectic manifold with a
non-degenerate closed 2-form $\omega$, and let $$f: M \rightarrow M$$
be a diffeomorphism preserving the symplectic form $\omega$.  We further
assume that $f$ is exact symplectic, i.e., $f$ is isotopic to
identity; there is a 1-form $\beta$ such that $\omega = d \beta$, and
there is a function $g$ on $M$ such that $$dg = f^*\beta - \beta.$$
The real valued function $g$ is called the {\it action function} for
$f$. The choice of $g$ depends on two factors, the first one is the
integration constant.  This can be fixed by assigning $g$ a particular
value at a special point, say $x_0\in M$. The second factor is the
choice of $\beta$. Suppose $$\omega = d \beta = d \tilde{\beta},$$
then $\beta-\tilde{\beta}$ is a closed form. Let
$$[\beta-\tilde{\beta}] \in H^1(M, \R).$$
be the cohomology class of the difference. It turns out that this
cohomology class plays an essential role. 

First, let's suppose that the cohomology
class of $\beta-\tilde{\beta}$ is trivial, then  there is a
function $S: M \ra \R$, such that $$\tilde{\beta} - \beta = dS,$$ then
for any function $\tilde{g}$ such that
$$d\tilde{g} = f^*\tilde{\beta} - \tilde{\beta},$$
we have $$d\tilde{g} - d{g} =  (f^*\tilde{\beta} - f^*\beta)
+(\tilde{\beta} - \beta)  =  f^*d S - d S = d (S\circ f -  S)$$
or $$\tilde{g} - {g} = (S\circ f -  S) +C$$
for some constant $C$. Conversely, for any function $S$ on $M$, let
$$\tilde{g} = {g} + (S\circ f -  S) +C,$$
then $\tilde{g}$ is also an action function for $f$, with trivial cohomology
class for $\beta-\tilde{\beta}$ for corresponding 1-forms.

Two real valued functions $\tilde{g}$ and $g$ are said to be
{\em cohomologous}\/ if there is a real valued function $S$ on $M$
such that
$$\tilde{g} = {g} + (S\circ f -  S).$$

It is important to note that for any $f$-invariant measure $\mu$ on
$M$, $$\int_M (S\circ f - S) d\mu =\int_M Sd(f^*\mu) - Sd\mu =0, $$
in particular, $$\int_M (S\circ f - S) \omega^n =0. $$

It follows that, if $g$ and $\tilde{g}$ are cohomologous, then the
  action defined by these action functions are exactly same.

Next, we define the {\it mean action}\/ of $f$, the Calabi
invariant. If $M$ has bounded volume, we assume, without loss of
generality, rescale $\omega$ so that $\int_M  \omega^n=1$.  Then the
mean action is simply
$$\mathcal{A} (f) = \int_M g \omega^n.$$

We remark that the action, and therefore the mean action, defined
above is relative. It denpends on the integration constant $C$. If
neccessary, we will make proper choices to fix $C$.

The action also depends on the first cohomology of $M$. This
dependence is more interesting and will be explored later in the
section. For now, we will fix a 1-form $\beta$.

It is a very useful and convenient fact that the mean action is
additive for composition of diffeomorphisms.

\begin{prop} \label{prop:ac_comp}  Let $f_1$ and $ f_2$ be exact
  symplectic diffeomorphisms of $M$. Let 
$g_1$, $g_2$ and $g_{12}$ be action functions for $f_1$, $f_2$ and $f_2
\circ f_1$ respectively. Suppose there is a point $x_0 \in M$ such that
 $$g_{12}(x_0) = g_1(x_0) + g_2(f_1(x_0)).$$

Then $$\mathcal{A}(f_2 \circ f_1) = \mathcal{A}(f_1) +
  \mathcal{A}(f_2).$$ 
\end{prop}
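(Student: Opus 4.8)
The plan is to compute an action function for the composition $f_2\circ f_1$ explicitly in terms of $g_1$ and $g_2$, verify that it satisfies the defining differential relation $dg = (f_2\circ f_1)^*\beta - \beta$, and then use the normalization at $x_0$ to pin down the integration constant so that the candidate equals the given $g_{12}$; integrating against $\omega^n$ then yields the additivity. Concretely, I would start from the two defining relations $dg_1 = f_1^*\beta - \beta$ and $dg_2 = f_2^*\beta - \beta$. Applying $f_1^*$ to the second gives $f_1^*(dg_2) = f_1^*f_2^*\beta - f_1^*\beta = (f_2\circ f_1)^*\beta - f_1^*\beta$. Since pullback commutes with $d$, this is $d(g_2\circ f_1) = (f_2\circ f_1)^*\beta - f_1^*\beta$. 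Adding $dg_1 = f_1^*\beta - \beta$ to this, the $f_1^*\beta$ terms cancel and I obtain
$$d\bigl(g_1 + g_2\circ f_1\bigr) = (f_2\circ f_1)^*\beta - \beta.$$
Hence $g_1 + g_2\circ f_1$ is an action function for $f_2\circ f_1$.

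Since $f_2\circ f_1$ is connected (it is a composition of exact symplectic maps, hence isotopic to identity), any two action functions for it differ by a constant; therefore $g_{12}$ and $g_1 + g_2\circ f_1$ differ by a constant. The hypothesis $g_{12}(x_0) = g_1(x_0) + g_2(f_1(x_0))$ forces that constant to be zero, so $g_{12} = g_1 + g_2\circ f_1$ identically on $M$. Now integrate against the normalized volume form $\omega^n$ (with $\int_M \omega^n = 1$):
$$\mathcal{A}(f_2\circ f_1) = \int_M g_{12}\,\omega^n = \int_M g_1\,\omega^n + \int_M (g_2\circ f_1)\,\omega^n.$$
The first term is $\mathcal{A}(f_1)$ by definition. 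For the second term, I would use that $f_1$ preserves $\omega$, hence preserves $\omega^n$, so the change-of-variables / pushforward identity $\int_M (g_2\circ f_1)\,\omega^n = \int_M g_2\,(f_1)_*\omega^n = \int_M g_2\,\omega^n = \mathcal{A}(f_2)$ holds. Combining gives $\mathcal{A}(f_2\circ f_1) = \mathcal{A}(f_1) + \mathcal{A}(f_2)$.

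The only genuinely delicate point is the invariance step $\int_M (g_2\circ f_1)\,\omega^n = \int_M g_2\,\omega^n$: it uses that $f_1$ is volume-preserving (immediate from $f_1^*\omega = \omega$) and, if $M$ is non-compact or has boundary, that $g_2$ is integrable and $f_1$ maps $M$ onto $M$ — both of which hold in our setting since $f_1$ is a diffeomorphism of $M$ preserving the boundary components and $M$ has bounded volume. Everything else is a formal manipulation of the relation $dg = f^*\beta - \beta$ together with the fact that an exact symplectic map, being isotopic to the identity, has a connected domain so its action function is unique up to an additive constant. I expect no real obstacle beyond being careful that the normalization hypothesis at $x_0$ is exactly what is needed to eliminate the ambiguous constant.
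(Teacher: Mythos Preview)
Your proposal is correct and follows essentially the same approach as the paper: both derive the identity $g_{12} = g_1 + g_2\circ f_1$ (up to a constant fixed by the hypothesis at $x_0$) from the defining relations for $g_1$ and $g_2$, and then integrate against $\omega^n$ using that $f_1$ preserves the volume form. One small wording issue: the reason $g_{12}$ and $g_1 + g_2\circ f_1$ differ by a constant is that $M$ is connected (so a function with vanishing differential is constant), not that the composition is ``connected'' or isotopic to the identity.
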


\begin{proof} The action function $g_{12}$ is define by
$$dg_{12} = (f_2  \circ f_1)^* \beta -\beta,$$
hence,
$$dg_{12} =\left\{ f_1^*(f_2 ^* \beta) - f_2 ^* \beta \right\} + \{f_2 ^* \beta- \beta\}$$
$$=d\tilde{g}_1 + d g_2$$
for some function $\tilde{g}_1$. In fact  $$d\tilde{g}_1 = f_1^*(f_2 ^* \beta) - f_2 ^* \beta =
\{f_1^*(f_2^* \beta- \beta) - (f_2 ^* \beta - \beta)\} + \{f_1^*\beta -
\beta \}$$
$$=\{ f_1^*dg_2- dg_2\}  + dg_1= d(g_2\circ f_1-g_2) + dg_1.$$
Obviously, such $\tilde{g}_1$ exists, we may choose
$$\tilde{g}_1 = (g_2\circ f_1 -g_2) + g_1,$$
and then we may choose  $$g_{12} = \tilde{g}_1 + g_2 = (g_2\circ f_1 -g_2) + g_1 +
g_2 =  g_2\circ f_1  + g_1,$$
i.e., we may choose the action function for $f_2\circ f_1$ by adding
the action function for $f_1$ and the $f_1$-shifted action function for $f_2$. 
Under this choice, we have \be {g}_{12}(x_0) =  g_1(x_0) +
g_2(f_1(x_0)) \label{eq1}.\ee

Now \be \A(f_2 \circ f_1) &=& \int_M g_{12} \omega^n \nn \\
&=& \int_M (g_1 + g_2\circ f_1)
\omega^n \nn \\
&=& \int_M g_1 \omega^n + \int_M (g_2\circ f_1) 
\omega^n \nn \\
&=& \int_M g_1 \omega^n + \int_M g_2 (f_1^* 
\omega^n) \nn \\
&=& \int_M g_1 \omega^n + \int_M g_2
\omega^n \nn \\
&=& \A(f_1) + \A(f_2). \nn \ee
We remark that the above equality is independent of the choice of
1-form $\beta$ and therefore the choices of
the action functions, as long as condition (\ref{eq1}) holds.

This proves the proposition.
\end{proof}

\subsection{Action on invariant measures}
\label{sec:ac_invm}
 Let  $$f: M \rightarrow M$$ be an exact symplectic diffeomorphism and
 let $$g: M \ra \R$$ be a fixed action function for $f$. Let $\M_f$ be
 the set of all $f$-invariant probability measures on $M$. For any
 $\mu \in \M_f$, the {\em mean action}\/, or simply the action,  of $\mu$ is defined to be
 $$\mathcal{A}(\mu) = \int_M gd\mu.$$
 In particular, for any periodic orbit $\gamma = \{p_0, p_1, \ldots,
p_k=p_0\}$, where $f^i(p_0) = p_i$, for $i = 1, 2,\ldots, k$, the
corresponding invariant measure is
$$\mu_\gamma = \frac{1}{k} 
(\delta_{p_0} + \delta_{p_1} + \ldots + \delta_{p_{k-1}}), $$
and the mean action on $\gamma$ is
$$\mathcal{A}(\gamma) =\mathcal{A}(\mu_\gamma) = \frac{1}{k} (g(p_0) + g(p_1) + \ldots +
g(p_{k-1})).$$

\medskip

The action we defined so far is for exact symplectic
diffeomorphisms. However, it is a well-known fact that if $M$ is
compact, then there is no 1-form $\beta$ such that $d \beta = \omega$,
hence no symplectic diffeomorphism on a compact manifold is exact. A
class of symplectic diffeomorphisms that shares many properties with
exact symplectic diffeomorphisms is the {\em Hamiltonian
  diffeomorphisms}\/. These are the time-1 maps of periodic
Hamiltonian flows. On compact surfaces, one can blow up a contractible
fixed point, whose existence is guarantied by Arnold's conjecture, and
then consider the exact area-preserving map on a compact surface with
boundary.

On a compact symplectic manifold, we can also take another
approach. Since action on periodic orbits, and in extension, invariant
measures, is relative, we are more interested in the differences
between two invariant measures.

Let $p_1$ and $p_2$ be two fixed points for a symplectic
diffeomorphism $f: M \ra M$. We assume that $f$ is isotopic to
identity, but not necessarily exact.  We say that $p_1$ and $p_2$ are
homologous, if the loops $\tilde{\gamma}_1$ and $\tilde{\gamma}_2$,
starting with $p_1$ and $p_2$ respectively, following the isotopy of
$f$ back to $p_1$ and $p_2$ respectively, are homologous,
i.e., $$[\tg_1] = [\tg_2] \in H_1(M, \R)$$ Let $\gamma$ be a curve on
$M$ connecting $p_1$ and $p_2$, then it is easy to see that $\gamma$
and $f(\gamma)$ are homologous, there is a disk $D$ on $M$ such
that $$ \partial D = f(\gamma) - \gamma.$$ Finally, we define the
difference of the actions between $p_1$ and $p_2$ to be
$$\A(p_2) - \A(p_1) = \int_D \omega.$$
We remark that if $f$ is exact symplectic or Hamiltonian, then the
above definition is independent of the choice of $\gamma$. However, if
$f$ isotopic to identity, but not Hamiltonian, then it does depend on
the homotopy class of the curve $\gamma$.

\medskip

The action difference can be extended to periodic points easily. If
$\gamma_1$ and $\gamma_2$ are periodic orbits with common period
$k$. Assume that $\gamma_1$ and $\gamma_2$ are homologous. Let
$\gamma$ be a curve from one point in the orbit of $\gamma_1$ to a
point in the orbit of $\gamma_2$ and let $D$ be a disk $D$ on $M$ such
that $$ \partial D = f^k(\gamma) - \gamma,$$ then
$$\A(\gamma_2) - \A(\gamma_1) = \frac{1}{k}\int_D \omega.$$

Again, this can be extended to two invariant measures $\mu_1$ and
$\mu_2$. For this, we need to define the homology class of invariant
measures. This turns out to be exactly the rotation vectors, to be
defined next. Two invariant measures are said to be homologous if they
have the same rotation vectors.

\subsection{Rotation vectors}

Let $f: M \ra M$ be a symplectic diffeomorphism, isotopic to
identity. Let $\gamma = \{p_0, p_1, \ldots, p_k=p_0\}$, be a periodic
orbit, where $f^i(p_0) = p_i$, for $i = 1, 2,\ldots, k$. Let
$\tilde{\gamma}(t)$ be a closed curve on $M$ obtained by isotopy, with
$\tilde{\gamma}(i) = p_i$, for $i = 1, 2,\ldots, k$. We define the
rotation vector of $\gamma$ to be the homology class of
$\tilde{\gamma}$ divided by its period,
$$\rho(\gamma) = \frac{1}{k} [\tilde{\gamma} ] \in H_1(M, \R).$$

To generalize the concept of rotation vector, for any closed 1-form
$\alpha$ on $M$, we define the bi-linear form $<\cdot, \, \cdot >^*$ by 
$$<\gamma, \, \alpha>^* = \frac{1}{k}\oint_{\tilde{\gamma}} \alpha.$$
It is easy to see that above pairing depends only on the homology
class of $\tilde{\gamma}$, in $H_1(M, \R)$, and the cohomology class
of $\alpha$, in $H^1(M, \R)$. This paring equivalently defines, for
any periodic orbit $\gamma$, the rotation vector
$\rho(\gamma) \in H_1(M, \R)$, by the following equation:
$$< \rho(\gamma), [\alpha]> = <\gamma, \alpha>^* =
\frac{1}{k}\oint_{\tilde{\gamma}} \alpha$$ where the first pairing is
the canonical pairing between homology and cohomology of the manifold
$M$.

This definition of rotation vector can be naturally extended to $f$-invariant
measures in $\M_f$. Fix a closed 1-form $\alpha$, for any point $x \in
M$, let $\tilde{\gamma}(t, x), t\in \R$ be the curve in $M$ by connecting
orbit of $x$ by the isotopy in such a way that $\tilde{\gamma}(i, x) =
f^i(x)$. Define, if exists,
$$\rho(x, \alpha) = \lim_{T \ra \8} \frac{1}{T} \int_{\tilde{\gamma}(t, x): t \in [0,
  T]} \alpha.$$
It is easy to see that $\rho(x, \alpha) = \rho(x, \alpha')$, if
$[\alpha] = [\alpha'] \in H^1(M, \R)$.  This is because that if
$\alpha$ is exact, $\alpha = dS$ for some 
function $S: M \rightarrow \R$, then 
$$\rho(x, dS) =\lim_{T \ra \8} \frac{1}{T} \int_{\gamma(t, x): t \in [0,
  T]} dS = \lim_{T \ra \8} \frac{1}{T} (S(\gamma(T)) - S(x)) =0.$$

For any invariant measure $\mu \in \M_f$, by Birkhoff Ergodic Theorem,
for any fixed $\alpha$, for $\mu-a.e. \, x \in M$, the limit exists,
$\rho(x, \alpha)$ is 
well-defined. Therefore, it is also well-defined for a finite set of
basis vectors $[\alpha]$ in $H^1(M, \R)$. Hence, for $\mu -a.e. \, x \in M$,
$\rho(x, \alpha)$ is well-defined for {\em all}\/ closed 1-forms
$\alpha$. Moreover, by Birkhoff Ergodic Theorem,
$$\int \rho(x, \alpha) d\mu = \int \left(\int_{\gamma(t, x): t \in [0, 1]}
  \alpha \right)
d \mu$$

The above equation is linear in both $\alpha$ and $\mu$, it depends
only on the cohomology class of $\alpha$, therefore, it define a
pairing between $\mu$ and cohomology elements in $H^1(M, \R)$. This
pairing defines the rotation vector  $\rho(\mu) \in H_1(M, \R)$.

\begin{dfn}
For any $\mu \in M_f$ and any closed 1-form $\alpha$, $[\alpha] \in
H^1(M, \R)$, the {\em rotation vector} of $\mu$, $\rho(\mu) \in H_1(M, \R)$ is defined
by the following equation  
$$< \rho(\mu), [\alpha]>  = \int \left(\int_{\gamma(t, x): t \in [0, 1]}
  \alpha \right)
d \mu \in \R$$
where the left hand side  is
the canonical pairing between homology and cohomology of the manifold
$M$.
\end{dfn}

\medskip

As an example, if $f$ is a Hamiltonian diffeomorphism, then
$\omega^n \in \M_f$ and if $M$ is compact without boundary,
then
$$\rho(\omega^n) = 0.$$

This is not true in general for non-Hamiltonian symplectic
diffeomorphisms. An easy
example is $T_{(a, b)}: \T^2 \rightarrow \T^2$ given by
$$T_{(a, b)} (x, \, y) = (x + a, \, y + b) \mbox{ mod } \Z^2 ,$$ for
$(a, b) \notin \Z^2$.

This is also not true in general for Hamiltonian diffeomorphisms on
manifold with boundaries, for example the annulus, $\AN$.

\medskip

We now return to actions on invariant measures. As we have hinted
before, the action we defined depends on the choice of 1-form
$\beta$. It turns out that this dependence, interestingly, is closely
related to the rotation vector we just defined. Let $\alpha$ be a
closed 1-form on $M$, let
$$\tilde{\beta} = \beta + \alpha,$$
then $d\tilde{\beta} = d\beta = \omega$. Let $\tilde{g}$ and $g$ be
action functions defined by $\tilde{\beta}$ and $\beta$
respectively. To fix the integration constants, pick a point $x_0 \in
M$, let
$${g}(x)  = \int_{x_0}^x ( f^*({\beta}) - \beta)$$
and likewise $$\tilde{g}(x)  = \int_{x_0}^x ( f^*(\tilde{\beta}) -
\tilde{\beta)} = g(x) + \int_{x_0}^x (f^* \alpha - \alpha)$$

All the integrals are path independent. For any invariant measure
$\mu$, its action under $\tilde{g}$,
\be \widetilde{\A}(\mu) &=& \int_M \tilde{g} d\mu = \int_M g d\mu + \int_M \left(
\int_{x_0}^x (f^* \alpha - \alpha) \right) d\mu \nn \\
&=& \A(\mu) + \int_M \left( \int_{\gamma(t, x): t \in [0, 1]} \alpha
\right) d\mu - \int_M \left( \int_{\gamma(t, x_0): t \in [0, 1]} \alpha
\right) d\mu \nn \\
&=& \A(\mu) + <\rho(\mu), \alpha> - C_{x_0, \alpha},
\nn \ee
where $C_{x_0, \alpha}$ is a constant depending on $x_0$ and $\alpha$, 
but not on $\mu$. Here we used the Stokes' theorem and Birkhoff
Ergodic Theorem.

The constant $C_{x_0, \alpha}$ is zero, if our refence point $x_0$ is
a contractible fixed point.

Since the action is typically used in the relative sense, for comparison
between two different invariant measures, we have the
following proposition.

\begin{prop}
  Let $\mu_1, \mu_2 \in \M_f$ be two invariant probability measures and let
  $\widetilde{\A}$ and $\A$  be two actions defined by 1-forms
  $\tilde{\beta}$ and $\beta$ respectively. Let $\alpha =
  \tilde{\beta} - \beta$, then
  $$\widetilde{\A}(\mu_1) - \widetilde{\A}(\mu_2) = \A(\mu_1) -
  \A(\mu_2) + <\rho(\mu_1) - \rho(\mu_2), \, [\alpha]>$$

In particular, if two invariant measures $\mu_1$ and $\mu_2$ have exactly the
  same rotation vector, then the action difference $\A(\mu_1) -
  \A(\mu_2)$ is independent of any choice of 1-form $\beta$ and the
  integration constant.

  \label{prop4}\end{prop}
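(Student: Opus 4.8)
The plan is to read the proposition off the three–line computation displayed just above its statement, which already establishes, for every $\mu \in \M_f$, the identity
\[
\widetilde{\A}(\mu) = \A(\mu) + <\rho(\mu), [\alpha]> - C_{x_0, \alpha},
\]
where $\alpha = \tilde\beta - \beta$ and $C_{x_0, \alpha} \in \R$ depends only on the chosen base point $x_0$ and on $[\alpha]$, not on $\mu$. Applying this to $\mu_1$ and to $\mu_2$ and subtracting, the constant $C_{x_0, \alpha}$ cancels, and bilinearity of the canonical pairing between $H_1(M, \R)$ and $H^1(M, \R)$ rewrites $<\rho(\mu_1), [\alpha]> - <\rho(\mu_2), [\alpha]>$ as $<\rho(\mu_1) - \rho(\mu_2), [\alpha]>$. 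That is precisely the displayed formula of the proposition.

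Since that preliminary computation was stated tersely, I would fill in its one genuine step. Note $f^*\alpha - \alpha = d(\tilde g - g)$ is exact (this is implicit in the very existence of $\widetilde{\A}$), so $\phi(x) := \int_{x_0}^x (f^*\alpha - \alpha)$ is path independent on $M$. Write $A(x) = \int_{\gamma(t,x): t\in[0,1]}\alpha$ for the integral of $\alpha$ along one isotopy step from $x$ to $f(x)$, where $\gamma(s,x) = F_s(x)$ for the isotopy $F_s$, $s\in[0,1]$, from the identity to $f$. I claim $\phi(x) = A(x) - A(x_0)$: picking any path $c$ from $x_0$ to $x$ one has $\phi(x) = \int_{f\circ c}\alpha - \int_c\alpha$, and the isotopy–swept square $(s,u) \mapsto F_s(c(u))$ is a homotopy rel endpoints between the concatenation of $\gamma(\cdot,x_0)$ with $f\circ c$ and the concatenation of $c$ with $\gamma(\cdot,x)$, both running from $x_0$ to $f(x)$; since $\alpha$ is closed the two integrals coincide, which rearranges to $\phi(x) = A(x) - A(x_0)$. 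Integrating $\phi = A - A(x_0)$ against $\mu$ and using that $\int_M A\, d\mu = <\rho(\mu),[\alpha]>$ by the definition of $\rho(\mu)$ (legitimate by Birkhoff's ergodic theorem) gives the identity above with $C_{x_0,\alpha} = A(x_0) = \int_{\gamma(t,x_0): t\in[0,1]}\alpha$, which vanishes when $x_0$ is a contractible fixed point.

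For the two ``in particular'' assertions: if $\rho(\mu_1) = \rho(\mu_2)$ then $<\rho(\mu_1) - \rho(\mu_2), [\alpha]> = 0$ for every closed $1$–form $\alpha$, and since any two primitives $\tilde\beta$ and $\beta$ of $\omega$ differ by such an $\alpha$, the displayed formula forces $\widetilde{\A}(\mu_1) - \widetilde{\A}(\mu_2) = \A(\mu_1) - \A(\mu_2)$ --- independence of the choice of $\beta$. Independence of the integration constant is softer: changing it replaces $g$ by $g + C$ (and more generally any two action functions for a fixed $\beta$ differ by $(S\circ f - S) + C$ for some function $S$ and constant $C$), and both a constant and a coboundary $S\circ f - S$ integrate to the same value against $\mu_1$ as against $\mu_2$ --- the constant trivially, the coboundary by the invariance identity $\int_M (S\circ f - S)\, d\mu = 0$ recorded earlier --- so $\A(\mu_1) - \A(\mu_2)$ is unaffected.

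The only point carrying any content is the homotopy step of the second paragraph, identifying $\int_{x_0}^x (f^*\alpha - \alpha)$ with $A(x) - A(x_0)$; this is what is compressed into ``we used Stokes' theorem,'' and it is what ties the $\beta$–dependence of the action to the rotation vector. Everything else is bookkeeping with the bilinear pairing.
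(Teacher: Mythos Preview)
Your proposal is correct and follows exactly the paper's approach: the paper proves the proposition via the computation displayed immediately before its statement, deriving $\widetilde{\A}(\mu) = \A(\mu) + \langle\rho(\mu),[\alpha]\rangle - C_{x_0,\alpha}$ and then (implicitly) subtracting for $\mu_1$ and $\mu_2$. Your second paragraph is a welcome expansion of what the paper compresses into ``we used Stokes' theorem and Birkhoff Ergodic Theorem,'' but it is the same argument.
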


  \subsection{Actions on the annulus}

  We now restrict ourselves to two specific spaces, disk $\D$ and
  annulus $\AN$. First let $f$ be an orientation-preserving,
  area-preserving diffeomorphism of a unit disk $\D$ in
  $\R^2$. Clearly, $f$ is exact symplectic. The first homology of $\D$
  is trivial, so there is no ambiguity in the the action function. The
  action, particularly the difference in actions of two invariant
  measures, has a very clear geometric meaning. Take for example the simple
  case of two fixed points. Let $p_1$ and $p_2$ be two points in $\D$
  fixed by $f$. Pick any curve $\gamma$ connecting $p_1$ to
  $p_2$. There is a unique signed disk $U \subset \D$ such that $$\partial U
  = f(\gamma) - \gamma.$$
  Then simply, we have
  $$\A(p_2) - \A(p_1) = \int_U \omega_\D,$$
  where $\omega_\D$ is the nomalized standard area form on $\D$.

  If $p_1$ and $p_2$ are periodic points of period $k$, then we
  consider $f^k$, then the difference in action with respect to $f$ is that of
  $f^k$ divided by $k$. As for two invariant ergodic measures $\mu_1$
  and $\mu_2$, we can take generic points for these measures,
  approximate them by periodic points, then take limits.

  \
  
  The situation for the annulus is a little more complicated. Let $f$ be
  an area-preserving, orientation preserving diffeomoprhism on
  $\AN=S^1\times[0,1]$, isotopic to identity. It is easy to see that
  $f$ is exact symplectic. Suppose $p_1, p_2 \in \AN$ are two fixed
  points of $f$, and let $\gamma$ be a curve on $\AN$ connecting $p_1$
  and $p_2$. If $p_1$ and $p_2$ have exactly the same rotation number,
  then their orbits are homologous, this will be the same case as for
  the disks, their action difference in action
  is $$\A(p_2)-\A(p_1)=\int_D\omega,$$ where $D$ is a disk on $\AN$
  such that $ \partial D = f(\gamma) - \gamma$. This action difference
  is independent of the choice of $\beta$ in the definition of action
  function.

  If $p_1$ and $p_2$ are not homologous (i.e. when they have
  different rotation numbers), now $f(\gamma)$ and $\gamma$ does not
  bound an area, there is no natural way to define the region $D$ as
  in the previous case. By Proposition \ref{prop4}, the choice of
  1-form $\beta$ makes difference in the action. To remove the
  ambiguities,  we can collapse the boundary $A_0$ to a point,
  i.e. consider the quotient space $\AN/\sim$, where the equivalence
  class is defined by $(x_1,0)\sim (x_2,0)$ for all $x_i\in S^1$. The
  resulting space is the closed unit disk $\D$, and we still use $f$
  to denote the corresponding map on $\D$ and $\omega_\D$ the
  corresponding area form. For any fixed points $p_1, p_2$ of
  $f:\D \to \D$, the curves $f(\gamma)$ and $\gamma$ always bound a region $U$,
  i.e. $\partial U = f(\gamma) - \gamma$, thus the above defined
  action difference on $\AN$ is the same as $\int_U\omega_\D$,
  independent of  whether $p_1$ and $p_2$ have the same rotation number.

  Equivalently, if we add the boundary component
  $A_0=S^1 \times \{0\}$ to $\gamma$ and $f(\gamma)$ in $\AN$, then together
  they always bound a region, i.e. there is a signed region $U$ such
  that $\partial U = f(\gamma) - \gamma+ k A_0$, for some integer $k$
  depending on the rotation numbers of $p_1$ and $p_2$.  We can
  define $$\A(p_2)-\A(p_1)=\int_U\omega.$$

  The above geometric construction can be achieved by making proper
  choice of the primitive of $\omega$ on the annulus. Let $(x, y)$ be
  the natural coordinate system on $\AN = S^1 \times [0, 1]$, the
  standard area form is $\omega=dy\wedge dx$. We choose and fix $\beta
  = y dx$. This 1-form is exactly the pullback of the 1-form on the
  disk $\D$ with respect to collapsing of the bounday component $A_0$. 

  By fixing the 1-form, $\beta = y dx$, we have fixed the action
  function $g$ up to a constant.  For any diffeomorphism $f:\AN\to\AN$
  and invariant measures $\mu_1$ and $\mu_2$ in $\M_f$, the action
  difference of any two invariant measures
  $$\A(\mu_2)-\A(\mu_1)=\int_\AN gd\mu_2-\int_\AN gd\mu_1$$
  is well-defined.

\section{Proof of the main theorem}

We first prove some special cases of Theorem \ref{thm:main}. Notice that $f$ restricted to each of the boundary $\AN_0=S^1 \times \{0\}$ and $\AN_1=S^1 \times \{1\}$ is an orientation preserving circle diffeomorphism. Let $\rho_0$ and $\rho_1$ be the rotation numbers on the boundaries respectively. If $\rho_i=\frac{p}{q}$ $(i=0,1)$ is rational, then there is a periodic point with period $q$, in particular, it supports an atomic invariant measure; otherwise, there is an invariant measure supported on the boundary.

\begin{lem}
\label{lem:bdry}
Let $f$ be an area-preserving, orientation preserving diffeomoprhism on $\AN$, isotopic to identity. Let $\mu_0, \mu_1\in\M_f$ be invariant measures with supports in $\AN_0$ and $\AN_1$ respectively, and $|\A(\mu_0) -\A(\mu_1)| \neq 0$. Then there exists an interval with length $|\A(\mu_0) -\A(\mu_1)|$ such that for any rational number $\frac{p}{q}$ in the interval, there are at least two distinct periodic orbits of period $q$ with
rotation number $\frac{p}{q}$.\end{lem}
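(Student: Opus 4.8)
The plan is to reduce Lemma \ref{lem:bdry} to Franks' theorem by producing, from the hypothesis on the action difference, two invariant measures whose rotation numbers differ by exactly $|\A(\mu_0)-\A(\mu_1)|$. The key observation is that with the fixed primitive $\beta = y\,dx$, the action function $g$ has a very transparent form on each boundary circle: on $\AN_i = S^1\times\{i\}$ we have $f^*\beta - \beta = g$, and restricting to the circle $y=i$ the form $\beta$ restricts to $i\,dx$, so $g$ restricted to $\AN_i$ measures (up to the $\Z$-ambiguity of the circle coordinate) the displacement of $f$ on that circle weighted by $i$. Concretely, on $\AN_0$ the form $\beta$ vanishes, so $g|_{\AN_0}$ is constant (equal to the integration constant); on $\AN_1$, $g|_{\AN_1}(x)$ equals the lift $\tilde{f}(x)-x$ of the circle map, and integrating against the invariant measure $\mu_1$ gives $\A(\mu_1) = \int (\tilde f(x)-x)\,d\mu_1 = \rho_1$, the rotation number of $f$ on $\AN_1$ (with a compatible normalization of the constant). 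Thus $|\A(\mu_0)-\A(\mu_1)|$ is precisely $|\rho_0 - \rho_1|$ after the natural choice of integration constant, i.e. the action difference of these two boundary measures is nothing but the difference of their rotation numbers.

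With that identification in hand, the lemma becomes essentially a restatement of Franks' theorem (\cite{Franks1988, Franks1992}): for any rational $p/q$ strictly between $\rho_0$ and $\rho_1$ there are at least two distinct periodic orbits of period $q$ and rotation number $p/q$. The interval in question is the open interval with endpoints $\rho_0$ and $\rho_1$, which has length $|\rho_0-\rho_1| = |\A(\mu_0)-\A(\mu_1)|$. First I would carefully set the normalization: fix $g$ so that $g\equiv 0$ on $\AN_0$ (consistent with $\beta = y\,dx$ vanishing there), which forces $\A(\mu_0)=0$ if $\mu_0$ is supported on $\AN_0$, and then verify by the displacement computation above that $\A(\mu_1)=\rho_1-\rho_0 = \rho_1$ under this normalization; more care is needed if $\mu_0$ is not the unique invariant measure on $\AN_0$, but since $g|_{\AN_0}$ is constant, $\A(\mu_0)$ is independent of which invariant measure on $\AN_0$ we pick. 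Second, I would invoke Franks directly.

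The main obstacle is the bookkeeping of the integration constant and the lift. Franks' theorem is stated in terms of rotation numbers of a chosen lift $\tilde f:\R\times[0,1]\to\R\times[0,1]$, and the rotation numbers $\rho_0,\rho_1$ on the two boundaries are only defined modulo the ambient ambiguity once we fix such a lift. One must check that the action function $g$ defined via $dg = f^*\beta-\beta$ with $\beta = ydx$ is exactly the function $(x,y)\mapsto$ (the $x$-displacement of the chosen lift $\tilde f$, times $y$, plus lower-order terms), so that integrating $g$ against a boundary measure really does recover the rotation number of \emph{that same lift}. This is a direct but slightly delicate computation: writing $\tilde f(x,y) = (X(x,y), Y(x,y))$, one has $f^*\beta = Y\,dX$, so $g = \int (Y\,dX - y\,dx)$ along a path, and on $\AN_1$ one gets $g(x,1) = \int_{x_0}^x (Y\,dX - dx) = \big(X(x,1)-x\big) + \int_{x_0}^x (Y-1)\,dX$, and the second term vanishes on the boundary circle where $Y\equiv 1$; hence $g(x,1) = X(x,1)-x$ exactly, whose $\mu_1$-average is $\rho_1$. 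Once this identity is pinned down the lemma follows immediately, so the real content is entirely in this normalization lemma connecting $g$ to the rotation number on the boundary.
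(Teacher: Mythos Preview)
Your reduction to Franks' theorem is the right idea, but the identity $|\A(\mu_0)-\A(\mu_1)|=|\rho_0-\rho_1|$ is false, and this is where your argument breaks down. The computation at the end only shows that $g(x,1)-g(x_0,1)=(X(x,1)-x)-(X(x_0,1)-x_0)$ when you integrate $dg$ \emph{along} $\AN_1$; it does not determine the additive constant. If you normalize $g\equiv 0$ on $\AN_0$, then $g(x,1)=X(x,1)-x+C$ where $C=\int_\gamma(f^*\beta-\beta)$ along a path $\gamma$ crossing the annulus from $\AN_0$ to $\AN_1$; along such a path $Y\not\equiv 1$, so the ``second term'' you discard does not vanish. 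Hence $\A(\mu_1)=\rho_1+C$, not $\rho_1$.

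A concrete counterexample: take $f(x,y)=(x+\phi(y),y)$ with $\phi(0)=\phi(1)=0$ and $\int_0^1\phi\neq 0$. Then $\rho_0=\rho_1=0$, yet a direct computation along the vertical segment gives $C=\int_0^1 t\,\phi'(t)\,dt=-\int_0^1\phi\neq 0$, so $|\A(\mu_0)-\A(\mu_1)|=\big|\int_0^1\phi\big|\neq 0=|\rho_0-\rho_1|$. Your proposed interval $[\rho_0,\rho_1]$ collapses to a point, while the lemma still predicts periodic orbits.

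The paper avoids this by comparing $\mu_1$ not with $\mu_0$ but with the \emph{area} measure. Working on the universal cover, one takes a simple arc $\gamma$ from $p_0\in\tilde\AN_0$ to $p_1\in\tilde\AN_1$ and applies Stokes to the region $U$ with $\partial U=\gamma-\tilde f(\gamma)+h_{p_1}-h_{p_0}$; since $\beta=y\,dx$ one gets $\int_U\omega=g(p_0)-g(p_1)+\int_{h_{p_1}}\beta$. Averaging over $p_1$ with respect to $\mu_1$ and using that $\int_U\omega$ equals the mean rotation number $\rho(\omega)$ yields
\[
\rho(\omega)=\A(\mu_0)-\A(\mu_1)+\rho_1.
\]
So the correct interval is between $\rho_1$ and $\rho(\omega)$, of length $|\A(\mu_0)-\A(\mu_1)|$, and Franks applies. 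In the example above this is exactly $[0,\int_0^1\phi]$.
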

\begin{proof}
Let $\tilde{\AN}=\R\times [0,1]$ be the standard universal cover of $\AN$ and $\tilde{f}:\tilde{\AN}\to\tilde{\AN}$ a lift of $f$. Let $p_0, p_1$ be any two points on the boundaries $\tilde{\AN}_0, \tilde{\AN}_1$, respectively, and let $\gamma$ be a simple curve connecting $p_0$ and $p_1$. Let $U$ be the region bounded by $\gamma$, $\tilde{f}(\gamma)$, $h_{p_0}$ and $h_{p_1}$, where $h_{p_i}$ is the segment from $p_i$ to $\tilde{f}(p_i)$ on the boundaries $\tilde{\AN}_i$, $(i=0,1)$ respectively. Therefore we have \[\partial U=\gamma-\tilde{f}(\gamma)+h_{p_1}-h_{p_0}.\] 
Using the Stoke's theorem and the definition of the action function we have 
\[\int_U\omega=g(p_0)-g(p_1)+\int_{h_{p_1}}\beta-\int_{h_{p_0}}\beta.\]
Since $f$ is area-preserving, $\int_U\omega$ is independent of the choice of $p_0, p_1$ and the curve $\gamma$, in particular, $\int_U\omega=\rho(\omega)$, the mean rotation number of $\tilde{f}$ for the invariant measure induced by $\omega=dy\wedge dx$. Since $\beta=y dx$, we have \[\rho(\omega)=\int_U\omega=\A(\mu_0)-\A(\mu_1)+\rho_1.\]
Therefore, the rotation set of $\tilde{f}:\tilde{\AN}\to\tilde{\AN}$ contains a closed interval with length $|\A(\mu_0)-\A(\mu_1)|$. By Franks' theorem \cite{Franks1992}, for any rational number $\frac{p}{q}$ in the interval, there are at least two distinct periodic orbits of period $q$ with
rotation number $\frac{p}{q}$. In particular, for any positive integer $q> \frac{1}{|\A(\mu_0) -\A(\mu_1)|}$, there is a rational number with denominator $q$ contained in the interval, thus there must be at least two distinct periodic orbits with period $q$. Moreover, if $q$ is prime, then it is the least period.
\end{proof}

Now, let's consider the case with one of the invariant measure supported on the boundary and the other is atomic at a fixed point.

\begin{lem}
\label{lem:bdrypt}
Let $f$ be an area-preserving, orientation preserving diffeomoprhism on $\AN$, isotopic to identity. Let $\mu_0\in\M_f$ be an invariant measure with support in $\AN_0$, and $p$ is a fixed point of $f$ such that $|\A(\mu_0) -\A(p)| \neq 0$. Then for any positive integer $q$ such that
$$q> \frac{1}{|\A(\mu_0) -\A(p)|} ,$$
$f$ has at least two distinct
periodic orbits with period $q$, and $q$ is the least period if it is a prime number.
\end{lem}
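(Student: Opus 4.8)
The plan is to follow the template of Lemma~\ref{lem:bdry} as closely as possible: lift everything to the universal cover, extract a region whose $\omega$-area equals (a multiple of) the action difference, and read off a rotation interval to feed into Franks' theorem. Concretely, I would first lift $f$ to $\tilde f\colon\tilde\AN=\R\times[0,1]\to\tilde\AN$, choosing the lift that fixes a lift $\tilde p$ of $p$; with respect to this lift the atomic measure $\delta_p$ has rotation number $0$, and $\mu_0$ has some rotation number $\rho_0\in\R$. Note that the action function of $f^q$ for $\beta=y\,dx$ may be taken to be $g_q=\sum_{i=0}^{q-1}g\circ f^i$, so $(\tilde f^q)^*\beta-\beta=dg_q$, while $g_q(\tilde p)=q\,g(p)=q\,\A(p)$ because $\tilde p$ is fixed, and $g_q$ is constant equal to $q\,\A(\mu_0)$ on $\tilde\AN_0$ because $\beta=y\,dx$ vanishes on $\AN_0$, $f$ preserves $\AN_0$, and $\mu_0$ is supported there.

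Next I would run the Stokes' theorem computation. Fix a simple arc $\gamma$ from $\tilde p$ to a point $q_0\in\tilde\AN_0$, and for each $q\ge 1$ let $U_q$ be the region bounded by $\gamma$, $\tilde f^q(\gamma)$, and the boundary segment $h_q\subset\tilde\AN_0$ from $q_0$ to $\tilde f^q(q_0)$, so that $\partial U_q=\gamma+h_q-\tilde f^q(\gamma)$. Since $\int_{h_q}\beta=0$ and $\int_{\tilde f^q(\gamma)}\beta=\int_\gamma(\tilde f^q)^*\beta=\int_\gamma\beta+g_q(q_0)-g_q(\tilde p)$, one gets
$$\int_{U_q}\omega=g_q(\tilde p)-g_q(q_0)=q\bigl(\A(p)-\A(\mu_0)\bigr).$$
Now if $q>1/|\A(p)-\A(\mu_0)|$ then $\bigl|\int_{U_q}\omega\bigr|>1$, which exceeds the total area of $\AN$; after normalizing (WLOG $\A(p)-\A(\mu_0)>0$), this means the projection $\pi\colon U_q\to\AN$ is not area-injective. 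With a careful choice of $\gamma$ (simple, contained in a vertical strip of $x$-width less than $1$, meeting $\AN_0$ only at $q_0$, and — in the essential case $\rho_0=0$ — with $q_0$ a lift of a fixed point of $f|_{\AN_0}$) one shows that this failure of injectivity forces $\tilde f^q(\gamma)$ to cross a nonzero integer translate $\gamma+m$; equivalently, the rotation interval of $\tilde f$, which already contains $0$, must contain a point at distance at least $|\A(p)-\A(\mu_0)|$ from $0$, i.e.\ it has length at least $|\A(p)-\A(\mu_0)|$. From there the proof concludes exactly as in Lemma~\ref{lem:bdry}: for every $q>1/|\A(p)-\A(\mu_0)|$ there is a rational $p'/q$ in the rotation interval, and Franks' theorem gives at least two distinct period-$q$ orbits with rotation number $p'/q$, with $q$ the least period when $q$ is prime by coprimality of $p'$ and $q$. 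One should also remark that when $\rho_0\neq 0$ the difference $\A(p)-\A(\mu_0)$ is no longer $\beta$-independent (Proposition~\ref{prop4}), but then $0\neq\rho_0$ are already two distinct realized rotation numbers, and the same area estimate still supplies an interval of the required length containing $0$.

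I expect the main obstacle to be precisely the topological step "area of $U_q$ exceeds $1$ $\Longrightarrow$ $\tilde f^q(\gamma)$ overtakes a translate of $\gamma$ $\Longrightarrow$ the rotation interval is long." The area bound only says $\tilde f^q(\gamma)$ has large horizontal extent, and turning a large displacement of a single arc into a statement about rotation numbers of invariant measures is where Brouwer's plane translation theorem together with the area-preservation (recurrence) of $f$ — i.e.\ the core mechanism in Franks' own arguments — has to be invoked; getting the quantitative bound sharp enough to cover every $q>1/|\A(p)-\A(\mu_0)|$, rather than just large $q$, is the delicate point. As an alternative that sidesteps this, I would reduce to Lemma~\ref{lem:bdry} by a real-oriented blow-up of $f$ at $p$: $p$ becomes a boundary circle $E_p$ on which $\hat f$ restricts to the projectivization of $df_p$ (an orientation-preserving circle homeomorphism carrying an invariant measure $\nu_p$), the pulled-back form $\hat\omega=\pi^\ast\omega$ is $\hat f$-invariant of total area $1$, and since $g$ is continuous at $p$ its pullback restricts to the constant $g(p)$ on $E_p$, so $\A(\nu_p)=\A(p)$; applying Lemma~\ref{lem:bdry} to the annulus bounded by $\AN_0$ and $E_p$ (after first disposing of $\AN_1$, which the hypothesis never uses) then yields the periodic orbits, and a rotation number avoiding those of $\AN_0$, $E_p$ and $0$ descends to a genuine interior periodic orbit of $f$. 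The cost of this route is checking that the action machinery and Lemma~\ref{lem:bdry} still go through with $\hat\omega$ degenerate along $E_p$ (e.g.\ by removing a shrinking disk around $p$ and passing to a limit) and reconciling the $\beta=y\,dx$ conventions on $\AN$ and on the blown-up annulus.
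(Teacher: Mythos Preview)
Your first approach correctly computes $\int_{U_q}\omega = q\bigl(\A(p)-\A(\mu_0)\bigr)$ via Stokes, and you have correctly located the gap: passing from ``$|\int_{U_q}\omega|>1$'' to ``the rotation interval of $\tilde f$ has length $\ge |\A(p)-\A(\mu_0)|$'' is precisely the nontrivial step, and the paper does not try to do it that way.

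Your alternative --- blow up the fixed point $p$ and reduce to the two-boundary situation --- is essentially the paper's route, and the paper's implementation dissolves both of the concerns you flag at the end. Instead of working with the blown-up boundary $E_p$ and the degenerate pulled-back form there, the paper first \emph{collapses} $\AN_1$ to a single point $A_1$ (obtaining a disk with boundary $\AN_0$), and \emph{then} blows up $p$, producing a new annulus $\AN'$ with boundaries $\AN_0$ and the added circle $C$. Now $A_1$ is an \emph{interior} contractible fixed point of the induced map $f'$, so its rotation number is $0$ intrinsically --- no $\beta$-convention to reconcile. On the other hand the \emph{mean} rotation number of $f'$ equals $\int_{U}\omega = \A(\mu_0)-\A(p)$, by exactly your Stokes computation with $q=1$ (the area between $\gamma$ and $f(\gamma)$ is unchanged under the collapse and blow-up). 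Comparing this mean rotation with the rotation number $0$ of $A_1$ already gives a rotation interval of length $|\A(\mu_0)-\A(p)|$, and Franks' theorem concludes as in Lemma~\ref{lem:bdry}. So you never need the action on $E_p$, and you never compare two different primitives: the comparison is between a mean rotation number (an honest area integral) and the rotation number of a contractible fixed point, which is canonically zero.
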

\begin{proof}
If $p$ is on $\AN_1$, then it reduces to Lemma \ref{lem:bdry}, we assume $p\notin\AN_1$. 

Let $p_0$ be a point in the support of $\mu_0$, and let $\gamma$ be a simple curve connecting $p_0$ and $p$, which does not touch $\AN_1$. Let $U$ be the region bounded by $\gamma$, $f(\gamma)$, $h_{p_0}$, where $h_{p_0}$ is the segment from $p_0$ to $f(p_0)$ on the boundary $\AN_0$. Using the Stoke's theorem and the definition of the action function we have 
\[\int_U\omega=g(p_0)-g(p)-\int_{h_{p_0}}\beta=g(p_0)-g(p).\]
Since $f$ is area-preserving, we have $$\int_U\omega=\A(\mu_0)-\A(p).$$
Collapse the boundary $\AN_1$ to a point and denote it by the point $A_1$ in the resulting disk. Since on this disk, $$A_1\notin \bigcup _{0\leq t\leq 1}h_t(\gamma),$$ where $h_t, t\in[0,1]$ denotes the isotopy of $f$, we can blow up $p$ relative to $p_1$. Remove the point $p$ and add a boundary circle $C$. Denote the resulting annulus by $\AN'$ and the corresponding map by $f':\AN'\to\AN'$. Note that $A_1$ is a contractible fixed point for $f'$, hence its rotation number is zero.

In this process the transformations are area-preserving. If $\gamma'$ is a curve connecting the boundaries of $\AN'$, the area between $\gamma'$, $f'(\gamma')$ and the two boundaries of $\AN'$ will be equal to $\int_U\omega=\A(\mu_0)-\A(p)$ of the original map $f$. Thus the mean rotation number of $f'$ is $\A(\mu_0)-\A(p)$. Since the rotation number of the fixed point $A_1$ is zero, we get that the rotation set of $f'$ contains an interval of length $|\A(\mu_0)-\A(\mu_1)|$. Therefore the conclusion follows similar to Lemma \ref{lem:bdry}.

\end{proof}

\begin{rmk}
The above result also holds if we change the assumption of $\mu_0$ to $\mu_1\in\M_f$ whose support is in $\AN_1$. In this case we have \[\int_U\omega=\A(p)-\A(\mu_1)+\rho(\mu_1).\] Here $U$ is the region such that $\partial U=\gamma-f(\gamma)+h_{p_1}$, where $\gamma$ is a simple curve from $p$ to a point $p_1$ in the support of $\mu_1$. In this case, we collapse the boundary $\AN_0$ and blow up $p$. In the resulting new annulus $\AN'$, its mean rotation number is $\int_U\omega=\A(p)-\A(\mu_1)+\rho(\mu_1)$, and the boundary $\AN_1'$ has the same rotation number $\rho(\mu_1)$ as $\AN_1$ in the old annulus. Thus the rotation set contains a closed interval with length $|\A(\mu_0)-\A(\mu_1)|$.
\end{rmk}

Next, let's consider the case where both measures are supported on periodic orbits.

\begin{lem}
\label{lem:per}
Let $f$ be an area-preserving, orientation preserving diffeomoprhism on $\AN$, isotopic to identity. Suppose that $\gamma_1$ and $\gamma_2$ are periodic orbits with common period
$k$. If $|\A(\gamma_1) - \A(\gamma_2)|\neq 0$, then for any positive integer $q$ such that
$$q> \frac{1}{|\A(\gamma_1) - \A(\gamma_2)|} ,$$
$f$ has at least two distinct
periodic orbits with period $q$, and $q$ is the least period if it is a prime number.\end{lem}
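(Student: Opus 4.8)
The plan is to reduce to Lemma \ref{lem:bdrypt} after replacing $f$ by a power. First, if $\gamma_1$ or $\gamma_2$ lies on a boundary component of $\AN$, then it supports a boundary‑invariant atomic measure, and the statement follows from Lemma \ref{lem:bdry} or Lemma \ref{lem:bdrypt} (applied to $f^k$, whose fixed points include the points of these orbits); so I assume both orbits are interior. Choose $p_1\in\gamma_1$, $p_2\in\gamma_2$, which are fixed points of $F:=f^k$. Since $g+g\circ f+\cdots+g\circ f^{k-1}$ is an action function for $f^k$ (Proposition \ref{prop:ac_comp}), the action of $p_i$ as an $F$‑fixed point is $k\,\A(\gamma_i)$, so the action difference of $p_1$ and $p_2$ for $F$ is $k\,(\A(\gamma_1)-\A(\gamma_2))\neq 0$.

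Next I would run the surgery of Lemma \ref{lem:bdrypt} for $F$: collapse $\AN_1$ to a contractible fixed point $A_1$, obtaining the disk $\D$, then blow up the fixed point $p_1$ of $F$ relative to $A_1$ into a boundary circle $C_1$. This produces an annulus $\AN'$ with boundaries $C_1$ and $\AN_0$, an induced map $F'$ isotopic to the identity, and $A_1$, $p_2$ surviving as interior fixed points. The Stokes/area computation of Lemma \ref{lem:bdrypt}, with $\beta=y\,dx$ so that the boundary contribution along $\AN_0$ vanishes and the blow‑up along $C_1$ contributes nothing, identifies the action of the invariant measure on $C_1$ with the $F$‑action of $p_1$. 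Applying Lemma \ref{lem:bdrypt} to $F'$ with this boundary measure and the interior fixed point $p_2$ then gives: for every integer $Q>1/\bigl(k\,|\A(\gamma_1)-\A(\gamma_2)|\bigr)$ there are at least two periodic orbits of $F'$ of period $Q$, with least period $Q$ when $Q$ is prime.

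The remaining, and hardest, step is to convert periodic orbits of $F'=f^k$ into periodic orbits of $f$ of the prescribed least period $q$. Undoing the collapse and blow‑up, a period‑$Q$ orbit of $F'$ sits inside an $f$‑orbit whose $f$‑period $m$ satisfies $Q\mid m\mid kQ$; taking $Q=q$ is legitimate because $q>1/|\A(\gamma_1)-\A(\gamma_2)|\geq 1/\bigl(k\,|\A(\gamma_1)-\A(\gamma_2)|\bigr)$, but one must still exclude $m$ being a proper multiple of $q$. I expect this to be handled by running the construction directly for $f$ rather than for $F=f^k$: blow up the entire orbit $\gamma_1$ (and $\gamma_2$) instead of a single point, so that $f$ itself acts on the resulting surface with $\gamma_1$ becoming a cyclically permuted family of boundary circles; the same area computation produces a rotation interval of length $|\A(\gamma_1)-\A(\gamma_2)|$, and the form of Franks' theorem (and of the Poincar\'e--Birkhoff theorem) valid when a boundary component is a periodic orbit then yields genuine period‑$q$ orbits of $f$ for every $q>1/|\A(\gamma_1)-\A(\gamma_2)|$, with least period $q$ when $q$ is prime since the associated rotation number can be chosen coprime to $q$. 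Equivalently, one can keep the $f^k$ picture and pin down the $f$‑period by computing the winding number of the produced orbit around $\gamma_1$ in the blown‑up annulus. All the remaining ingredients — independence of the action difference from choices, the harmless extra contractible fixed point $A_1$, and the count of two orbits — are exactly as in Lemmas \ref{lem:bdry} and \ref{lem:bdrypt}.
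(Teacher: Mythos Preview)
Your reduction is exactly the paper's: pass to $F=f^k$, observe the $F$-action difference of the fixed points $p_1,p_2$ is $k(\A(\gamma_1)-\A(\gamma_2))$, collapse one boundary and blow up one of the $p_i$, then invoke Lemma~\ref{lem:bdrypt}. (The paper collapses $\AN_0$ and blows up $p_2$; you collapse $\AN_1$ and blow up $p_1$ --- an immaterial symmetry.)

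The divergence is precisely at your ``hardest step,'' and here the paper takes a shortcut you missed. The paper does \emph{not} manufacture periodic orbits of $F'$ and then try to read off their $f$-period; instead it announces at the outset that it suffices to show the rotation set of $F=f^k$ on the \emph{original} annulus $\AN$ contains an interval of length $k|\A(\gamma_1)-\A(\gamma_2)|$. Granting that, the rotation number under $f^k$ is $k$ times the rotation number under $f$, so the rotation set of $f$ itself contains an interval of length $|\A(\gamma_1)-\A(\gamma_2)|$, and Franks' theorem applied directly to $f$ produces orbits with the correct $f$-period $q$. Under this framing the divisor ambiguity $q\mid m\mid kq$ you worry about never materializes. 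Your proposal, by contrast, extracts $F'$-periodic orbits first and is then stuck with that ambiguity, which you rightly flag as unresolved.

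That said, your instinct that something is being swept under the rug is not misplaced: the paper is terse about why the surgery on $\AN'$ yields a rotation interval for $F$ on the \emph{original} $\AN$ (the homology generator, hence the meaning of ``rotation number,'' changes under the collapse--blowup), and it does not spell out this passage. Your suggested repairs --- blow up the whole orbit $\gamma_i$ so that $f$ itself acts on the surgered surface, or track the winding data to pin down the $f$-period --- are reasonable ways to make the argument honest, but the paper's intended route is the rotation-set reduction above.
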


\begin{proof}
Consider the map $f^k$, thus $f^k$ has two fixed points $p_1, p_2$. We have $$\A_k(p_1)-\A_k(p_2)=k(\A(\gamma_1) - \A(\gamma_2)),$$ where $\A_k$ represents the action of $f^k$. Let $F=f^k$, it suffices to show that the rotation set of $F:\AN\to\AN$ contains a closed interval with length  $k|\A(\gamma_1) -\A(\gamma_2)|$. 

We assume $p_1, p_2$ are not on the boundaries, otherwise it will reduce to the above lemmas. Collapse $\AN_0$ to a point so that we get a closed disk, next blow up $p_2$ and add a circle $C$ to get an annulus $\AN'$. Use the standard coordinates $(x,y)\in S^1\times [0,1]$ for $\AN'$ such that the circle $C=S^1\times\{0\}$. Also let $\beta=ydx$ be the primitive for $\AN'$, then the action difference between $p_1$ and the boundary $C$ for $F':\AN'\to\AN'$ is equal to the original action difference of $p_1, p_2$ for $F: \AN\to\AN$.  Apply the proof of Lemma \ref{lem:bdrypt} to $F':\AN'\to\AN'$ completes the proof of Lemma \ref{lem:per}.

\end{proof}

Finally, let's prove the general case where $\mu_1, \mu_2$ are any two invariant measures in $\M_f$ with non-zero action difference. 

\begin{proof}[Proof of Theorem \ref{thm:main}]
By Ergodic decomposition theorem (cf. \cite{Mane1987}), it suffices to prove the theorem by assuming $\mu_1$, $\mu_2$ are ergodic invariant measures. We first assume both of the measures are non-atomic. For each fixed positive integer $q$ such that $q>\frac{1}{|\A(\mu_1) -\A(\mu_2)|}$, choose $\epsilon=\epsilon(q)>0$ so that $$q>\frac{1}{|\A(\mu_1) -\A(\mu_2)|-\epsilon}.$$

Let $p_i\in \AN$ be regular points of $\mu_i$, $i=1,2$ respectively,  then \[\lim_{n\to\infty}\frac{1}{n}\sum_{j=0}^{n-1}g(f^j(p_i))=\int_\AN g d\mu_i=\A(\mu_i),\quad i=1,2.\]
Notice that $f$ is area-preserving, thus the regular points $p_i$ are recurrent. Since both measures are non-atomic, there are small neighborhoods $U_i$ of $p_i$ and sufficiently large $k\in \N$ such that $f^k(p_i)\in U_i$ and $f^j(U_i)\cap U_i=\emptyset$ for all $j=1, \cdots, k-1$. 
We will assume $k$ is large enough so that  $$k>\frac{1}{|\A(\mu_1) -\A(\mu_2)|} +10q$$ and \[|\frac{1}{n}\sum_{j=0}^{n-1}g(f^j(p_i))-\A(\mu_i)|<\frac{\ve}{5},\quad \forall n\geq k.\]

Let $V_i\subset U_i$ be a disk containing $p_i$ and $f^k(p_i)$ and we choose an isotopy $h_t: \AN\to \AN$ such that 
\begin{itemize}
\item[(1)] $h_0=\text{id}: \AN\to \AN$,
\item[(2)] $h_t(z)=z$ for all $z$ in $\AN\setminus (V_1\cup V_2)$,
\item[(3)] $h_1(f^k(p_i))=p_i$, $i=1, 2$.
\end{itemize}
Let $f_1=h_1\circ f$ and notice that $f_1^k(p_i)=p_i$, and $f_1^j(p_i)=f^j(p_i)$ for $1\leq j\leq k-1$. Let $g_1$ be the action function for $f_1$ and $\A_1$ the corresponding action acts on invariant measures. If we choose $U_i$ sufficiently small we can make \[|\A_1(\mu_{p_i})-\A(\mu_i)|<\frac{\ve}{2},\quad i=1, 2,\] where $\mu_{p_i}$ is the measure supported on $\{p_i, f(p_i), \cdots, f^{k-1}(p_i)\}$ and each point has measure $\frac{1}{k}$. Notice that each $\mu_{p_i}$ is an invariant measure for $f_1$ since $p_i$ are $k$-periodic for $f_1$. 
Now, consider the map $f_1:\AN\to\AN$, which is isotopic to the identity and preserves orientation. Moreover, $f_1$ has two periodic points $p_1, p_2$ with period $k$ and $$|\A_1(\mu_{p_1})-\A_1(\mu_{p_2})|\geq |\A(\mu_1)-\A(\mu_2)|-\ve.$$ Then by Lemma \ref{lem:per}, $f_1$ has at least two periodic points with period $\tilde{q}$ whenever $$\tilde{q}>\frac{1}{|\A(\mu_1) -\A(\mu_2)|-\ve}.$$  Notice that $k>\frac{1}{|\A(\mu_1) -\A(\mu_2)|} +10q$. By the construction of the perturbation $h_1$ whose support is in $U_1\cup U_2$, we know that any new periodic points that may be created by the perturbation must have periods no less than $k$. Thus any periodic points of $f_1$ with much smaller periods than $k$ must be periodic points of the original map $f$ in the beginning, hence $f$ has at least two periodic points with period $q$ and proving the theorem.

If one of the measure is non-atomic and the other is atomic, we only need to do perturbation for the non-atomic measure and then the proof is similar. If both measures are atomic, this reduces to the case of Lemma \ref{lem:per}.
\end{proof}

\section{Application and examples}

In this section, as application of our main theorem, we give some
examples.

\begin{example}[Local perturbation and periodic points]
\label{ex:ptb}
Let $f: \AN\to \AN$ be a rigid irrational rotation, i.e. \[f(x, y)=(x+a, y),\quad a\in\R\setminus\Q,\] then $f$ has no periodic points. In this case, $f^*\beta-\beta=0$, thus the action function is a constant. Without loss of generality, we assume it's the zero function, then the action of any $\mu\in \M_f$ is zero. 

Consider a local perturbation, i.e. an area-preserving, orientation preserving diffeomoprhism $h:\AN\to\AN$ isotopic to identity, such that $h$ supports on a small open set $U\subset\AN$. Choose $h$ properly (cf. Example \ref{ex:disk}), we can make the mean action (i.e. the Calabi invariant) of $h$ strictly positive, thus by Proposition \ref{prop:ac_comp}, the mean action of the perturbed map $f':=f\circ h$ satisfies \[\A(f')=\A(f)+\A(h)=\A(h)\neq0.\] 
Notice that outside $U$, $h$ is the identity map, thus the action of the boundary of $f'$ is still zero. By Theorem \ref{thm:main}, for any positive integer $q$ such that
$q> \frac{1}{|\A(h)|}$, $f'$ has at least two distinct periodic orbits with period $q$.

In general, if the local perturbation changes the action, one can create periodic points, which could be very useful in many situations.
\end{example}

\begin{example}[Mean action of a monotone twist map on the disk]
\label{ex:disk}
Let $\D$ be the unit disk with standard area form $\omega_\D=\frac{1}{\pi}rdrd\theta$ and let $\beta_\D=\frac{1}{2\pi}r^2d\theta$. Let $\phi: [0,1]\to\R$ be a smooth function such that $$\phi'(r)\leq0, \quad \phi(0)>0,\quad \phi(1)=0,$$ and $\phi(r)=0$ for all $r$ near $1$. Consider the map \[h:\D\to\D\] \[(r, \theta)\mapsto(r,\theta+\phi(r)).\]
Thus \[h^*\beta_\D-\beta_\D=\frac{1}{2\pi}r^2\phi'(r)dr=dg,\] 
let \[g(r, \theta)=\frac{1}{2\pi}\int_1^rs^2\phi'(s)ds.\]
The mean action of $h$ is equal to \[\A(h)=\int g\omega_\D=\frac{1}{\pi}\int_0^1r\int_1^rs^2\phi'(s)dsdr>0.\]
We can use such a map on $U=\D_\epsilon$, the disk of small radius $\epsilon>0$, to construct the desired local perturbation in Example \ref{ex:ptb}.
\end{example}

\section*{Acknowledgement}
The first author is supported by Sun Yat-sen University start-up grant No. 74120-18841290.

\bibliographystyle{acm}

\bibliography{action}

\end{document}